\DeclareFontShape{T1}{lmr}{bx}{sc} { <-> ssub * cmr/bx/sc }{}
\pgfplotsset{compat=newest}
\numberwithin{equation}{section}
\setlist[enumerate]{label=(\roman*)}
\theoremstyle{plain}
\newtheorem{theorem}{Theorem}[section]
\newcommand{\dual}[1]{#1^\prime}
\newcommand{\linBoundedOps}{\mathcal{L}}
\newcommand{\linBoundedBilin}{\mathcal{B}}
\newcommand{\intd}{\text{d}}
\newcommand{\field}{\R}
\newcommand{\R}{\ensuremath{\mathbb{R}}}
\newcommand{\norm}[2]{\left\lVert #2 \right\rVert_{#1}}
\newcommand{\rT}[1]{#1^\top}
\newcommand{\rTb}[1]{\rT{\left( #1 \right)}}
\newcommand{\diffFun}[1]{C^{#1}}
\newcommand{\banachAdjoint}[1]{#1^{\vee}}
\newcommand{\evalField}[2][\point]{\mleft.\kern-\nulldelimiterspace{#2}\mright|_{#1}}
\newcommand{\textd}{\mathrm{d}}
\newcommand{\dd}[2][]{\ensuremath{\tfrac{\textd^{#1}}{\textd#2}}}
\newcommand{\ddt}{\ensuremath{\dd{t}}}
\newcommand{\coord}[1]{\bm{#1}}
\newcommand{\dimStateSpace}{N}
\newcommand{\dimOutput}{n_{\vecOutput}}
\newcommand{\dimRed}{n}
\newcommand{\mnf}{M}
\newcommand{\point}{x}
\newcommand{\tangentSpaceT}{T}
\newcommand{\TpMnf}[1][\point]{\tangentSpaceT_{#1}\mnf}
\newcommand{\diracStruct}{\mathcal{D}}
\newcommand{\diracStructISO}{\diracStruct_{\mathrm{iso}}}
\newcommand{\diracStructES}{\diracStruct_{\mathrm{es}}}
\newcommand{\spaceFlow}{\mathcal{F}}
\newcommand{\spaceFlowRes}{\spaceFlow_{\mathrm{R}}}
\newcommand{\spaceFlowResState}{\spaceFlow_{\mathrm{R},1}}
\newcommand{\spaceFlowResIO}{\spaceFlow_{\mathrm{R},2}}
\newcommand{\spaceFlowIO}{\spaceFlow_{\mathrm{IO}}}
\newcommand{\vecFlowRes}{f_{\spaceFlowRes}}
\newcommand{\vecFlowResState}{f_{\spaceFlowResState}}
\newcommand{\vecFlowResIO}{f_{\spaceFlowResIO}}
\newcommand{\vecFlowIO}{f_{\spaceFlowIO}}
\newcommand{\vecEffortRes}{e_{\spaceFlowRes}}
\newcommand{\vecEffortResState}{e_{\spaceFlowResState}}
\newcommand{\vecEffortResIO}{e_{\spaceFlowResIO}}
\newcommand{\vecEffortIO}{e_{\spaceFlowIO}}
\newcommand{\vecEffort}{e}
\newcommand{\vecFlow}{f}
\renewcommand{\vec}{v}
\newcommand{\vecAlt}{w}
\newcommand{\vecState}{x}
\newcommand{\vecInput}{u}
\newcommand{\vecOutput}{y}
\newcommand{\Ham}{H}
\newcommand{\gradHam}{\tfrac{\partial \Ham}{\partial \vecState}}
\newcommand{\gradHamCoord}{\tfrac{\partial \coord{\Ham}}{\partial \coord{\vecState}}}
\newcommand{\vecSpaceState}{v}
\newcommand{\vecSpaceIO}{v_{\mathrm{IO}}}
\newcommand{\reduce}[1]{\check{#1}}
\newcommand{\robMat}{\bm{V}}
\newcommand{\vecStateRedCoord}{\reduce{\coord{\vecState}}}
\newcommand{\mapISO}{K}
\newcommand{\mapES}{L}
\newcommand{\spaceState}{\mathbb{V}}
\newcommand{\spaceStateAlt}{\mathbb{W}}
\newcommand{\spaceIO}{\mathbb{V}_\mathrm{IO}}
\newcommand{\approximate}[1]{\widetilde{#1}}
\newcommand{\spaceIOApprox}{\approximate{\mathbb{V}}_\mathrm{IO}}
\newcommand{\spaceStateApprox}{\approximate{\mathbb{V}}}
\newcommand{\vecStateApprox}{\approximate{\vecState}}
\newcommand{\vecInputApprox}{\approximate{\vecInput}}
\newcommand{\vecOutputApprox}{\approximate{\vecOutput}}
\newcommand{\opES}{\mathcal{C}}
\newcommand{\fES}{f}
\newcommand{\domainTime}{I_t}
\newcommand{\tInit}{t_0}
\newcommand{\tEnd}{t_{\text{end}}}
\newcommand{\prodDual}[3][\spaceFlow]{\left\langle {#2}, {#3} \right\rangle_{#1}}
\newcommand{\restrict}[2]{\evalField[#2]{#1}}
\newcommand{\res}[1]{\hat{#1}}
\newcommand{\Kstate}{J}
\newcommand{\KstateIO}{G}
\newcommand{\KIO}{N}
\newcommand{\KStoepselState}{T_{1}}
\newcommand{\KStoepselIO}{T_{2}}
\newcommand{\Wstate}{R}
\newcommand{\WstateRes}{\res{\Wstate}}
\newcommand{\WIO}{S}
\newcommand{\WIORes}{\res{\WIO}}
\newcommand{\WstateIO}{P}
\newcommand{\WstateIORes}{\res{\WstateIO}}
\newcommand{\Lstate}{\omega}
\newcommand{\LstateIO}{\gamma}
\newcommand{\LIO}{\mu}
\newcommand{\LStoepselState}{\tau_{1}}
\newcommand{\LStoepselIO}{\tau_{2}}
\newcommand{\dissipation}{\phi}
\newcommand{\dissipationRes}{\res{\dissipation}}
\newcommand{\Phistate}{\rho}
\newcommand{\PhistateRes}{\res{\Phistate}}
\newcommand{\PhiIO}{\sigma}
\newcommand{\PhiIORes}{\res{\PhiIO}}
\newcommand{\PhistateIO}{\pi}
\newcommand{\PhistateIORes}{\res{\PhistateIO}}
\newcommand{\isopH}{\textsf{iso-pH}\xspace}
\newcommand{\espH}{\textsf{es-pH}\xspace}
\newcommand{\mor}{\textsf{MOR}\xspace}
\newcommand{\pH}{\textsf{pH}\xspace}
\newcommand{\ebe}{\textsf{EBE}\xspace}
\newcommand{\pbe}{\textsf{PBE}\xspace}
\newcommand{\di}{\textsf{DI}\xspace}
\title{Energy-stable Port-Hamiltonian Systems}
\author[P.~Buchfink \and S.~Glas \and H.~Zwart]{Patrick Buchfink${}^{\dagger}$ \and Silke Glas${}^\dagger$ \and Hans Zwart${}^{\dagger,\ddagger}$}
\address{${}^{\dagger}$ Department of Applied Mathematics, University of Twente, P.O. Box 217, 7500 AE Enschede, The Netherlands}
\email{\{p.buchfink,s.m.glas,h.j.zwart\}@utwente.nl}
\address{${}^{\ddagger}$ Department of Mechanical Engineering, Eindhoven University of Technology, P.O. Box 513, 5600 MB Eindhoven, The Netherlands}
\date{\today}
\begin{document}
\renewcommand*{\arraystretch}{1.2}
\maketitle
\begin{quote}
\footnotesize
\vspace{-7pt}
\textsc{Abstract.}
    We combine energy-stable and port-Hamiltonian (\pH) systems to obtain \emph{energy-stable port-Hamiltonian (\espH) systems}. The idea is to extend the known energy-stable systems with an input--output port, which results in a \pH formulation.
    One advantage of the new \espH formulation is that it naturally preserves its \espH structure throughout discretization (in space and time) and model reduction. 
\end{quote}
\vspace{21pt}

{\footnotesize \textsc{Keywords:}
energy-stable systems,
port-Hamiltonian systems,
structure-preserving discretization,
model reduction
}
\section{Introduction}
\label{sec:intro}

Energy-based concepts for evolution problems are important to formulate interpretable and physically consistent models.
Such models typically come with a power and energy balance equation (\pbe and \ebe) and/or dissipation inequality (\di) bounding the system energy,
which can be used to guarantee properties like passivity, stability, and boundedness of solutions.
The two energy-based systems relevant in this paper are (a)~energy-stable \cite{Egger2021} and (b)~port-Hamiltonian systems (\pH) (see, e.g., \cite{vanderSchaft2014}).
While energy-stable systems offer a straight-forward framework for structure-preserving discretization and model reduction (\mor), \pH systems extend Hamiltonian systems by adding ports; thus allowing to formulate open and dissipative systems.
Structure-preservation in discretization and \mor is beneficial since it typically enables to transfer physical guarantees of the system to the discrete (or reduced) system.

In this work, we show that (a) and (b) can be combined to a new formalism, \emph{energy-stable port-Hamiltonian (\espH) systems}.
This new formalism combines the advantages of (a) and (b) resulting in energy-based systems equipped with an input--output port and a general framework for structure-preserving discretization and \mor.
We present the \espH\ system in three formulations; each with a separate goal:
(i) a matrix formulation in \Cref{subsec:matrix_formulation} to illustrate the idea,
(ii) a formulation on possibly infinite-dimensional space in \Cref{sec:banach_space}, extending energy-stable systems,
and, (iii) a formulation on Dirac structures in \Cref{sec:dirac_structure} to show how \espH systems can be understood as \pH systems.

The paper provides a general structural framework for modeling energy-based systems with an \ebe and/or \di. For such a general framework, it is hard to address questions of existence and uniqueness. Thus, we do not specify initial value or boundary conditions here, which must be addressed when looking at a particular problem formulation.

Structure-preserving techniques have also been considered for
Hamiltonian systems via the Dirac--Frenkel formalism, see, e.g., \cite[Sec.~2]{Lub08},
Hamiltonian PDEs, see, e.g., \cite{Bridges2006},
Hamiltonian field theory, see, e.g., \cite{Marsden1983},
and 
energy-stable systems \cite{Egger2021}.
Our new \espH\ formulation extends these works by adding an input--output port that is crucial to model forced systems in control theory.
Port-Hamiltonian systems are already equipped with an input--output port. Moreover, a multitude of discretization techniques and \mor algorithms have been presented over the last years, see, e.g. \cite{polyuga2012effort, chaturantabut2016structure, https://doi.org/10.1002/pamm.201900399, kinon2023port, mehrmann2023control, Rettberg2023, SCHWERDTNER2023105655, Giesselmann2024}.
To our knowledge, however, none of these techniques are as natural as the techniques presented for energy-stable systems in \cite{Egger2021} in the sense that solely a (Petrov--)Galerkin projection is sufficient for structure-preservation, which is our motivation to formulate \espH systems.

As we understand our work as an extension to the related works listed in the previous paragraph, we foresee \espH systems to be applicable to the applications listed therein including, e.g., wave and elasticity equations~\cite{Marsden1983}, shallow water equations~\cite{Bridges2006}, and magneto-quasistatics~\cite{Egger2021}. A numerical study is subject to future work.

Notationwise, for a vector space $\spaceState$ and an interval $\domainTime = [\tInit, \tEnd] \subset \R$,
we denote an arbitrary point $\vecState \in \spaceState$ or a curve $\vecState(\cdot): \domainTime \to \spaceState$ using the same symbol $\vecState$. To differentiate the two objects, the curve is always indicated by brackets with $\vecState(\cdot)$ or $\vecState(t)$ for $t \in \domainTime$.
Moreover, we denote the evaluation of state-dependent operators with $\evalField{(\cdot)}$.
Note that the same notation may be used to denote the restriction of an operator to a subset/subspace $\spaceStateApprox \subset \spaceState$ with $\evalField[\spaceStateApprox]{(\cdot)}$.

\subsection{Illustrating the main idea in finite-dimensional vector spaces}
\label{subsec:matrix_formulation}
We are interested in energy-based systems using an \emph{energy functional} (or \emph{Hamiltonian}) $\coord{\Ham} \in \diffFun{1} (\R^{\dimStateSpace}, \R)$,
that describe the dynamics of
a state $\coord{\vecState}(\cdot): \domainTime \to \R^{\dimStateSpace}$ and
an output curve $\coord{\vecOutput}(\cdot): \domainTime \to \R^{\dimOutput}$
on a time interval $\domainTime \vcentcolon= [\tInit, \tEnd] \subset \R$.
As reference, we consider the \emph{input--state--output port-Hamiltonian (\isopH) system (with feedthrough term)}
\begin{linenomath*}
    \begin{equation}
    \label{eq:isopH_state_coord}
    \begin{aligned}
        \ddt \coord{\coord{\vecState}}(t)
&= \left( \evalField[\coord{\vecState}(t)]{\coord{\Kstate}} - \evalField[\coord{\vecState}(t)]{\coord{\Wstate}} \right) \evalField[\coord{\vecState}(t)]{\gradHamCoord}
+ \left( \evalField[\coord{\vecState}(t)]{\coord{\KstateIO}} - \evalField[\coord{\vecState}(t)]{\coord{\WstateIO}} \right) \coord{\vecInput}(t),\\
        \coord{\vecOutput}(t)
&= \left( \rT{\evalField[\coord{\vecState}(t)]{\coord{\KstateIO}}} + \rT{\evalField[\coord{\vecState}(t)]{\coord{\WstateIO}}} \right) \evalField[\coord{\vecState}(t)]{\gradHamCoord}
+ \left( \evalField[\coord{\vecState}(t)]{\coord{\WIO}} - {\evalField[\coord{\vecState}(t)]{\coord{\KIO}}} \right) \coord{\vecInput}(t),
\end{aligned}
\end{equation}
\end{linenomath*}
from, e.g., \cite{vanderSchaft2014,mehrmann2023control}, with matrix-valued functions $\coord{J}, \coord{R}: \R^{\dimStateSpace}  \to \R^{\dimStateSpace \times \dimStateSpace }$, $\coord{G}, \coord{P}: \R^{\dimStateSpace}  \to \R^{\dimStateSpace \times \dimOutput}$ and $\coord{S}, \coord{N}: \R^{\dimStateSpace}  \to \R^{\dimOutput \times \dimOutput }$.  
Moreover, it must hold, that the matrix-valued functions $\coord{Z}, \coord{W}: \R^{\dimStateSpace} \to \R^{(\dimStateSpace + \dimOutput) \times (\dimStateSpace + \dimOutput)}$ combining $\coord{J}$, $\coord{R}$, $\coord{G}$, $\coord{P}$, $\coord{S}$, and $\coord{N}$ are pointwise skew-symmetric and pointwise symmetric positive (semi-)definite, respectively,
\begin{linenomath*}
\begin{align*}
    \evalField[\coord{\vecState}]{\coord{Z}} &\vcentcolon=
    \begin{bmatrix}
        \evalField[\coord{\vecState}]{\coord{\Kstate}} & 
        \evalField[\coord{\vecState}]{\coord{\KstateIO}}\\
        -\rT{\evalField[\coord{\vecState}]{\coord{\KstateIO}}} & 
        \evalField[\coord{\vecState}]{\coord{\KIO}}
    \end{bmatrix},&
    \evalField[\coord{\vecState}]{\coord{W}} &\vcentcolon=
    \begin{bmatrix}
        \evalField[\coord{\vecState}]{\coord{\Wstate}} & 
        \evalField[\coord{\vecState}]{\coord{\WstateIO}} \\
        \rT{\evalField[\coord{\vecState}]{\coord{\WstateIO}}} & 
        \evalField[\coord{\vecState}]{\coord{\WIO}}
    \end{bmatrix},&
\end{align*}
\end{linenomath*}
allowing to rewrite \eqref{eq:isopH_state_coord} as
\begin{linenomath*}
\begin{align*}
    \begin{bmatrix}
        -\ddt \coord{\vecState}(t)\\
        \coord{\vecOutput}(t)
    \end{bmatrix}
    +
    \left(
        \evalField[\coord{\vecState}(t)]{\coord{Z}}
        - \evalField[\coord{\vecState}(t)]{\coord{W}}
    \right)
    \begin{bmatrix}
        \evalField[\coord{\vecState}(t)]{\gradHamCoord}\\
        \coord{\vecInput}(t)
    \end{bmatrix}
    =
    \coord{0}.
\end{align*}
\end{linenomath*}
In this paper, we introduce the \emph{energy-stable port-Hamiltonian (\espH) system}
\begin{linenomath*}
    \begin{equation}
    \label{eq:espH_state_coord}
    \begin{aligned}
        \left(
            -\evalField[\coord{\vecState}(t)]{\coord{\Lstate}}
            + \evalField[\coord{\vecState}(t)]{\coord{\Phistate}}
        \right) \ddt \coord{\vecState}(t)
    &= -\evalField[\coord{\vecState}(t)]{\gradHamCoord}
    + \left(
        \evalField[\coord{\vecState}(t)]{\coord{\LstateIO}}
        - \evalField[\coord{\vecState}(t)]{\coord{\PhistateIO}}
    \right) \coord{\vecInput}(t),\\
            \coord{\vecOutput}(t)
    &= \left( 
        \rT{\evalField[\coord{\vecState}(t)]{\coord{\LstateIO}}}
        + \rT{\evalField[\coord{\vecState}(t)]{\coord{\PhistateIO}}}
    \right) \ddt \coord{\vecState}(t)
    + \left(
        - \evalField[\coord{\vecState}(t)]{\coord{\LIO}}
        +{\evalField[\coord{\vecState}(t)]{\coord{\PhiIO}}}
    \right) \coord{\vecInput}(t),
    \end{aligned}
    \end{equation}
\end{linenomath*}
where $\coord{\omega}, \coord{\rho}: \R^{\dimStateSpace}  \to \R^{\dimStateSpace \times \dimStateSpace }$, $\coord{\gamma}, \coord{\pi}: \R^{\dimStateSpace}  \to \R^{\dimStateSpace \times \dimOutput}$ and \mbox{$\coord{\mu}, \coord{\sigma}: \R^{\dimStateSpace}  \to \R^{\dimOutput \times \dimOutput }$} are matrix-valued maps.
Again, it must hold that the matrix-valued maps ${\coord{\lambda}}, \coord{\dissipation}: \R^{\dimStateSpace}  \to \R^{(\dimStateSpace + \dimOutput) \times (\dimStateSpace + \dimOutput)}$ combining $\coord{\omega}$, $\coord{\rho}$, $\coord{\gamma}$, $\coord{\pi}$, $\coord{\mu}$, and $\coord{\sigma}$ are pointwise skew-symmetric
and symmetric positive (semi-)definite, respectively,
\begin{linenomath*}
\begin{align*}
    \evalField[\coord{\vecState}]{\coord{\lambda}} &\vcentcolon=
    \begin{bmatrix}
        \evalField[\coord{\vecState}]{\coord{\Lstate}} & 
        \evalField[\coord{\vecState}]{\coord{\LstateIO}}\\
        -\rT{\evalField[\coord{\vecState}]{\coord{\LstateIO}}} & 
        \evalField[\coord{\vecState}]{\coord{\LIO}}
    \end{bmatrix},&
    \evalField[\coord{\vecState}]{\coord{\dissipation}} &\vcentcolon=
    \begin{bmatrix}
        \evalField[\coord{\vecState}]{\coord{\Phistate}} & 
        \evalField[\coord{\vecState}]{\coord{\PhistateIO}} \\
        \rT{\evalField[\coord{\vecState}]{\coord{\PhistateIO}}} & 
        \evalField[\coord{\vecState}]{\coord{\PhiIO}}
    \end{bmatrix},
\end{align*}
\end{linenomath*}
{allowing to rewrite \eqref{eq:espH_state_coord} as }
\begin{linenomath*}
\begin{align*}
    \begin{bmatrix}
        -\evalField[\coord{\vecState}(t)]{\gradHamCoord}\\
        \coord{\vecOutput}(t)
    \end{bmatrix}
    + \left(
        \evalField[\coord{\vecState}(t)]{\coord{\lambda}}
        - \evalField[\coord{\vecState}(t)]{\coord{\dissipation}}
    \right)
\begin{bmatrix}
    \ddt \coord{\vecState}(t)\\
    \coord{\vecInput}(t)
\end{bmatrix}
&= \coord{0}.
\end{align*}
\end{linenomath*}
We show that the \espH\ system is a \pH system and
that it provides a \di.
The latter can be conducted from the previous equation by multiplication with
${
    \left[
        \rT{\ddt \coord{\vecState}(t)}\quad
        \rT{\coord{\vecInput}(t)}
    \right]
}$,
using the chain rule, skew-symmetry of $\evalField[\coord{\vecState}]{\coord{\lambda}}$ and positive (semi-)definiteness of $\evalField[\coord{\vecState}]{\coord{\dissipation}}$ with
\begin{linenomath*}
\begin{align*}
    \ddt \left[ \coord{\Ham}(\coord{\vecState}(t)) \right]
&= \rTb{\ddt \coord{\vecState}(t)} \evalField[\coord{\vecState}(t)]{\gradHamCoord}\\
&= \rT{\coord{\vecOutput}(t)} \coord{\vecInput}(t)
+
\underbrace{
    \rT{
        \begin{bmatrix}
            \ddt \coord{\vecState}(t)\\
            \coord{\vecInput}(t)
        \end{bmatrix}
    }
    \evalField[\coord{\vecState}(t)]{\coord{\lambda}}
    \begin{bmatrix}
        \ddt \coord{\vecState}(t)\\
        \coord{\vecInput}(t)
    \end{bmatrix}
}_{
    =0
}
\underbrace{
    -
    \rT{
        \begin{bmatrix}
            \ddt \coord{\vecState}(t)\\
            \coord{\vecInput}(t)
        \end{bmatrix}
    }
    \evalField[\coord{\vecState}(t)]{\coord{\dissipation}}
    \begin{bmatrix}
        \ddt \coord{\vecState}(t)\\
        \coord{\vecInput}(t)
    \end{bmatrix}
}_{
    \leq 0
}\\
&\leq \rT{\coord{\vecOutput}(t)} \coord{\vecInput}(t).
\end{align*}
\end{linenomath*}
Although the differences between the two formulations \eqref{eq:isopH_state_coord} and \eqref{eq:espH_state_coord} are subtle, the novel formulation \eqref{eq:espH_state_coord} comes with the key advantage that the formulation offers natural structure-preserving methods for discretization (in space and time) and \mor following the idea of the original energy-stable system \cite{Egger2021}.
To demonstrate this, we provide the formula for the reduced model in matrix formulation:%
\footnote{%
The same framework works for \mor of parametric systems but is skipped in the present paper for the sake of brevity.%
}
Galerkin projection with a reduced-order basis matrix $\robMat \in \R^{\dimStateSpace \times \dimRed}$ results in
\begin{linenomath*}
    \begin{equation}\label{eq:espH_coord_mor}
    \begin{aligned}
        \rT\robMat 
        \left(
            -\evalField[\robMat {\vecStateRedCoord}(t)]{\coord{\Lstate}}
            + \evalField[\robMat {\vecStateRedCoord}(t)]{\coord{\Phistate}}
        \right) \robMat \ddt {\vecStateRedCoord}(t)
    = -\rT\robMat \evalField[\robMat {\vecStateRedCoord}(t)]{\gradHamCoord}
    +
    \rT\robMat
    \left(
        \evalField[\robMat {\vecStateRedCoord}(t)]{\coord{\LstateIO}}
        - \evalField[\robMat {\vecStateRedCoord}(t)]{\coord{\PhistateIO}}
    \right) \coord{\vecInput}(t),\\
            \coord{\vecOutput}(t)
    =
    \left( 
        \rT{\evalField[\robMat {\vecStateRedCoord}(t)]{\coord{\LstateIO}}}
        + \rT{\evalField[\robMat {\vecStateRedCoord}(t)]{\coord{\PhistateIO}}}
    \right)
    \robMat \ddt {\vecStateRedCoord}(t)
    + \left(
        - \evalField[\robMat {\vecStateRedCoord}(t)]{\coord{\LIO}}
        +{\evalField[\robMat {\vecStateRedCoord}(t)]{\coord{\PhiIO}}}
    \right) \coord{\vecInput}(t),
    \end{aligned}
    \end{equation}
\end{linenomath*}
which is again an \espH\ system of reduced order $\dimRed \ll \dimStateSpace$ based on the reduced quantities $\evalField[\vecStateRedCoord]{\reduce{\coord{\Lstate}}} \vcentcolon= \rT\robMat \evalField[\robMat \vecStateRedCoord]{{\coord{\Lstate}}} \robMat \in \R^{\dimRed \times \dimRed}$,
$\evalField[\vecStateRedCoord]{\reduce{\coord{\Phistate}}} \vcentcolon= \rT\robMat \evalField[\robMat \vecStateRedCoord]{{\coord{\Phistate}}} \robMat \in \R^{\dimRed \times \dimRed}$,
$\reduce{\coord{\Ham}}(\reduce{\coord{\vecState}}) \vcentcolon= \coord{\Ham}(\robMat \reduce{\coord{\vecState}})$,
$\evalField[\vecStateRedCoord]{\reduce{\coord{\LstateIO}}} \vcentcolon= \rT\robMat \evalField[\robMat \vecStateRedCoord]{{\coord{\LstateIO}}}$,
$\evalField[\vecStateRedCoord]{\reduce{\coord{\PhistateIO}}} \vcentcolon= \rT\robMat \evalField[\robMat \vecStateRedCoord]{{\coord{\PhistateIO}}},
\in \R^{\dimRed \times \dimOutput}$ defined for all $\vecStateRedCoord \in \R^{\dimRed}$.%

\section{Formulation on Banach Spaces: Structure-preserving discretization and reduction via (Petrov-)Galerkin}
\label{sec:banach_space}

In this section, we extend energy-stable systems by an input--output port and derive the corresponding \pbe and \di. Moreover, the ideas of structure-preserving discretization and \mor are transferred from energy-stable systems to the \espH formulation.

\subsection{Fundamentals: Banach spaces, bounded linear operators and bilinear forms}
\label{sec:background}
Given two real Banach spaces $\left( \spaceState, \norm{\spaceState}{\cdot} \right)$, $\left( \spaceStateAlt, \norm{\spaceStateAlt}{\cdot} \right)$,
we denote the \emph{bounded linear operators from $\spaceState$ to $\spaceStateAlt$} with $\linBoundedOps(\spaceState, \spaceStateAlt)$.
Further, we denote the \emph{dual space} with $\dual\spaceState \vcentcolon= \linBoundedOps(\spaceState, \field)$ and the \emph{duality pairing} with $\prodDual[\spaceState]{\cdot}{\cdot}: \dual\spaceState \times \spaceState \to \field$.

Let $\linBoundedBilin(\spaceState, \spaceStateAlt; \field)$  be the space of bounded bilinear forms $a: \spaceState \times \spaceStateAlt \to \R$.
A bounded bilinear form $a \in \linBoundedBilin(\spaceState, \spaceStateAlt; \field)$ naturally defines two bounded linear maps $A \in \linBoundedOps(\spaceState, \dual\spaceStateAlt)$ and $\banachAdjoint{A} \in \linBoundedOps(\spaceStateAlt, \dual\spaceState)$ with
\begin{linenomath*} 
    \begin{align*}
        \forall \vec \in \spaceState, \vecAlt \in \spaceStateAlt:
        \prodDual[\spaceStateAlt]{A \vec}{\vecAlt}&
        \vcentcolon= a(\vec, \vecAlt), &
        &\text{and}&
        \forall \vec \in \spaceState, \vecAlt \in \spaceStateAlt:
        \prodDual[\spaceState]{\banachAdjoint{A}\vecAlt}{\vec}&
        \vcentcolon= a(\vec, \vecAlt)
    \end{align*}
\end{linenomath*}
and vice versa.
It holds $\prodDual[\spaceStateAlt]{A \vec}{\vecAlt} = a(\vec, \vecAlt) = \prodDual[\spaceState]{\banachAdjoint{A} \vecAlt}{\vec}$ for all $\vec \in \spaceState$, $\vecAlt \in \spaceStateAlt$.
In the case $\spaceState = \spaceStateAlt$, the bilinear form $a$ and respective operators $A$ and $\banachAdjoint{A}$ are called \emph{symmetric} if $a(\vec_2, \vec_1) = a(\vec_1, \vec_2)$ for all $\vec_1, \vec_2 \in \spaceState$ (or analogously $A = \banachAdjoint{A}$) and \emph{skew-symmetric} if $a(\vec_2, \vec_1) = -a(\vec_1, \vec_2)$ for all $\vec_1, \vec_2 \in \spaceState$ (or analogously $A = -\banachAdjoint{A}$).
Moreover, we refer to these objects as \emph{positive (semi-)definite} if $a(\vec, \vec) \geq 0$ or $a(\vec, \vec) > 0$ for all $\vec \in \spaceState$.

\subsection{Fundamentals: Energy-stable systems}

An \emph{energy-stable system} $(\spaceState, \opES, \Ham, \fES)$ is characterized by
a Banach space ($\spaceState$, $\norm{\spaceState}{\cdot}$),
a function $\opES: \spaceState \to \linBoundedOps(\spaceState, \dual\spaceState)$ yielding a pointwise bounded linear operator $\evalField[\vecState]{\opES} \in \linBoundedOps(\spaceState, \dual\spaceState)$,
a continuously differentiable functional $\Ham \in \diffFun{1} (\spaceState, \field)$ called the \emph{energy functional} with derivative $\evalField[\vecState]{\gradHam} \in \dual\spaceState$ at $\vecState \in \spaceState$,
and an additional right-hand side term $\fES: \spaceState \to \dual\spaceState$.
Then, the goal of an energy-stable system is to find $\vecState(\cdot) \in \diffFun{1}(\domainTime, \spaceState)$ of the evolution problem
\begin{linenomath*}
\[
\evalField[{\vecState(t)}]{\opES} \; \ddt \vecState(t)
= -\evalField[{\vecState(t)}]{\gradHam}
+ \evalField[\vecState(t)]{\fES}
\in \dual\spaceState
\]
\end{linenomath*}
or equivalently in the \emph{variational formulation}
\begin{linenomath*}
\begin{align}\label{eq:es_variational_coord}
    \prodDual[\spaceState]{\evalField[{\vecState(t)}]{\opES} \; \ddt \vecState(t)}{v}
    = - \prodDual[\spaceState]{\evalField[{\vecState(t)}]{\gradHam}}{v}
    + \prodDual[\spaceState]{\evalField[\vecState(t)]{\fES}}{v}
    \qquad
    \forall v \in \spaceState.
\end{align}
\end{linenomath*}

Note, that this formalism can equally model systems of partial differential equations (PDEs), ordinary differential equations (ODEs), or differential algebraic equations (DAEs) mostly depending on the choice of $\spaceState$ and $\opES$.
By the chain rule and evaluating \eqref{eq:es_variational_coord} for $\vecSpaceState=\ddt \vecState(t)$, all classical solution curves $\vecState \in \diffFun{1}(\domainTime, \spaceState)$ satisfy the \emph{\pbe}

\begin{linenomath*}
\[
\ddt \left[ \Ham(\vecState(t)) \right]
=
\prodDual[\spaceState]{\evalField[\vecState(t)]{\gradHam}}{\ddt \vecState(t)}
=
\prodDual[\spaceState]{\evalField[\vecState(t)]{\fES}}{\ddt \vecState(t)}
-
\prodDual[\spaceState]{\evalField[\vecState(t)]{\opES} \ddt \vecState(t)}{\ddt \vecState(t)}.
\]
\end{linenomath*}
As shown in \cite{Egger2021} or in \Cref{subsec:projection,subsec:time_discretization} for the more general \espH systems,
a (Petrov--)Galerkin projection of the variational formulation can be used to naturally preserve the underlying structure throughout discretization and \mor.

\subsection{Banach-space formulation of \espH\ systems}
\label{subsec:formulation}
We characterize a \espH\ system $(\spaceState, \spaceIO, \lambda, \dissipation, \Ham)$ with real Banach spaces $(\spaceState, \norm{\spaceState}{\cdot})$, $(\spaceIO, \norm{\spaceIO}{\cdot})$,
a pointwise skew-symmetric map
$\lambda: \spaceState \to\linBoundedOps\left(\spaceState \times \spaceIO, \dual{\left(\spaceState \times \spaceIO\right)}\right)$,
a pointwise symmetric and positive (semi-)definite map
$\dissipation: \spaceState \to\linBoundedOps\left(\spaceState \times \spaceIO, \dual{\left(\spaceState \times \spaceIO\right)}\right)$,
and a continuously differentiable functional $\Ham \in \diffFun{1} (\spaceState, \field)$ called the \emph{energy functional} with derivative $\evalField[\vecState]{\gradHam} \in \dual\spaceState$.
Given an input signal $\vecInput(\cdot): \domainTime \to \spaceIO$,
the goal of a \espH\ system is to find a solution curve $\vecState(\cdot) \in \diffFun{1}(\domainTime, \spaceState)$ and output curve $\vecOutput(\cdot): \domainTime \to \spaceState$ of
\begin{linenomath*}
\begin{align}
    \label{eq:espH_state}
    \left(
        -\evalField[\vecState(t)]{\Lstate}
        + \evalField[\vecState(t)]{\Phistate}
    \right) \ddt \vecState(t)
&= -\evalField[\vecState(t)]{\gradHam}
+ \left(
    \evalField[\vecState(t)]{\LstateIO}
    - \evalField[\vecState(t)]{\PhistateIO}
\right) \vecInput(t) \in \dual\spaceState,\\
    \label{eq:espH_io}
        \vecOutput(t)
&= \left( 
    \banachAdjoint{\evalField[\vecState(t)]{\LstateIO}}
    + \banachAdjoint{\evalField[\vecState(t)]{\PhistateIO}}
\right) \ddt \vecState(t)
+ \left(
    - \evalField[\vecState(t)]{\LIO}
    +{\evalField[\vecState(t)]{\PhiIO}}
\right) \vecInput(t) \in \dual\spaceIO,
\end{align}
\end{linenomath*}
where the maps $\omega, \rho: \spaceState \to\linBoundedOps\left(\spaceState, \dual{\spaceState}\right)$, $\gamma, \pi: \spaceState \to\linBoundedOps\left(\spaceIO, \dual{\spaceState}\right)$ and $\mu, \sigma: \spaceState \to\linBoundedOps\left( \spaceIO, \dual{ \spaceIO}\right)$ are subblocks of
\begin{linenomath*}
\begin{align*}
    \evalField[\vecState]{\lambda} &=
    \begin{bmatrix}
        \evalField[\vecState]{\Lstate} & 
        \evalField[\vecState]{\LstateIO}\\
        -\banachAdjoint{\evalField[\vecState]{\LstateIO}} & 
        \evalField[\vecState]{\LIO}
    \end{bmatrix},&
    &\text{and}&
    \evalField[\vecState]{\dissipation} &=
    \begin{bmatrix}
        \evalField[\vecState]{\Phistate} & 
        \evalField[\vecState]{\PhistateIO} \\
        \banachAdjoint{\evalField[\vecState]{\PhistateIO}} & 
        \evalField[\vecState]{\PhiIO}
    \end{bmatrix}.
\end{align*}
\end{linenomath*}
So \eqref{eq:espH_state} and \eqref{eq:espH_io} can be rewritten equivalently as
\begin{linenomath*}
\begin{align}
    \label{eq:espH_system}
        \begin{bmatrix}
            -\evalField[\vecState(t)]{\gradHam}\\
            \vecOutput(t)
        \end{bmatrix}
        +
        \left(
            \evalField[\vecState(t)]{\lambda} - \evalField[\vecState(t)]{\dissipation}
        \right)
        \begin{bmatrix}
            \ddt \vecState(t)\\
            \vecInput(t)
        \end{bmatrix}
    =
    0
    \in \dual{\spaceState} \times \dual{\spaceIO}.
\end{align}
\end{linenomath*}
With the choice $\evalField[\vecState(t)]{\opES} = -\evalField[\vecState(t)]{\Lstate} + \evalField[\vecState(t)]{\Phistate}$ and neglecting the input and output, the \espH system is an energy-stable system.
In parallel to \eqref{eq:es_variational_coord}, a variational formulation which is equivalent to \eqref{eq:espH_state} and \eqref{eq:espH_io} or \eqref{eq:espH_system} can be formulated as
\begin{linenomath*}
    \begin{align}\label{eq:espH_variational}
        \prodDual[\spaceState \times \spaceIO]{
            \begin{bmatrix}
                -\evalField[\vecState(t)]{\gradHam}\\
                \vecOutput(t)
            \end{bmatrix}
            +
            \left(
                \evalField[\vecState(t)]{\lambda} - \evalField[\vecState(t)]{\dissipation}
            \right)
            \begin{bmatrix}
                \ddt \vecState(t)\\
                \vecInput(t)
            \end{bmatrix}
        }{
            \begin{bmatrix}
                \vecSpaceState\\
                \vecSpaceIO
            \end{bmatrix}
        }
        &=
        0
        \qquad
        \forall \begin{bmatrix}
            \vecSpaceState\\
            \vecSpaceIO
        \end{bmatrix} \in \spaceState \times \spaceIO.
    \end{align}
    \end{linenomath*}
We now show that the \espH system allows to formulate a \pbe that respects the input--output ports and satisfies a \di.

\begin{theorem}\label{thm:power_balance_equation}
    The solution curve $\vecState(\cdot) \in \diffFun{1}(\domainTime, \spaceState)$ and output curve $\vecOutput(\cdot): \domainTime \to \spaceState$ of an \espH system obey the \pbe given in (a), and the \di given in (b),
    \begin{linenomath*}
    \begin{equation}
    \begin{aligned}\label{eq:dissipation_inequality}
        &\text{(a) }
        \ddt [\Ham(\vecState(t))]
        =
        -\prodDual[\spaceState \times \spaceIO]{
            \evalField[\vecState(t)]{\dissipation}
            \begin{bmatrix}
                \ddt \vecState(t)\\
                \vecInput(t)
            \end{bmatrix}
        }{
            \begin{bmatrix}
                \ddt \vecState(t)\\
                \vecInput(t)
            \end{bmatrix}
        }
        +
        \prodDual[\spaceIO]{\vecOutput(t)}{\vecInput(t)},\\
        &\text{(b) }
        \ddt [\Ham(\vecState(t))]
        \leq
        \prodDual[\spaceIO]{\vecOutput(t)}{\vecInput(t)}.
    \end{aligned}
    \end{equation}
    \end{linenomath*}
\end{theorem}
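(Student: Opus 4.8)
\medskip
\noindent\textbf{Proof plan.}
The plan is to differentiate $t\mapsto\Ham(\vecState(t))$ by the chain rule and then to test the variational formulation \eqref{eq:espH_variational} against the pair formed from the state velocity and the input. Since $\vecState(\cdot)\in\diffFun{1}(\domainTime,\spaceState)$, we have $\ddt\vecState(t)\in\spaceState$, so $\xi(t)\vcentcolon=\bigl[\ddt\vecState(t),\ \vecInput(t)\bigr]^\top$ is an admissible test element of $\spaceState\times\spaceIO$, and the chain rule gives $\ddt[\Ham(\vecState(t))]=\prodDual[\spaceState]{\evalField[\vecState(t)]{\gradHam}}{\ddt\vecState(t)}$, exactly as in the energy-stable \pbe recalled above. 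The right-hand side is the pairing that the system equations let us rewrite.

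Next I would choose the test element $[\vecSpaceState,\vecSpaceIO]^\top=\xi(t)$ in \eqref{eq:espH_variational}. Under the canonical identification $\dual{(\spaceState\times\spaceIO)}\cong\dual\spaceState\times\dual\spaceIO$, the product duality pairing splits as $\prodDual[\spaceState\times\spaceIO]{[\eta_1,\eta_2]^\top}{[\vecSpaceState,\vecSpaceIO]^\top}=\prodDual[\spaceState]{\eta_1}{\vecSpaceState}+\prodDual[\spaceIO]{\eta_2}{\vecSpaceIO}$; hence \eqref{eq:espH_system} tested against $\xi(t)$ becomes, using linearity of the pairing in its first argument,
\begin{align*}
-\prodDual[\spaceState]{\evalField[\vecState(t)]{\gradHam}}{\ddt\vecState(t)}+\prodDual[\spaceIO]{\vecOutput(t)}{\vecInput(t)}&+\prodDual[\spaceState\times\spaceIO]{\evalField[\vecState(t)]{\lambda}\,\xi(t)}{\xi(t)}\\
&-\prodDual[\spaceState\times\spaceIO]{\evalField[\vecState(t)]{\dissipation}\,\xi(t)}{\xi(t)}=0.
\end{align*}
Combining this with the chain-rule identity to eliminate $\prodDual[\spaceState]{\evalField[\vecState(t)]{\gradHam}}{\ddt\vecState(t)}$, and then invoking pointwise skew-symmetry of $\evalField[\vecState(t)]{\lambda}$ — whose associated bounded bilinear form $a$ satisfies $a(\xi(t),\xi(t))=-a(\xi(t),\xi(t))$ by the definition in \Cref{sec:background}, hence $a(\xi(t),\xi(t))=0$ over $\R$ — kills the $\lambda$-term and yields (a). Statement (b) then follows by discarding the contribution $-\prodDual[\spaceState\times\spaceIO]{\evalField[\vecState(t)]{\dissipation}\,\xi(t)}{\xi(t)}$, which is $\le 0$ precisely because $\evalField[\vecState(t)]{\dissipation}$ is pointwise positive semi-definite.

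The argument is short; the only mild subtleties are bookkeeping which Banach space each object lives in — in particular $\vecOutput(t)\in\dual\spaceIO$, so that $\prodDual[\spaceIO]{\vecOutput(t)}{\vecInput(t)}$ is exactly the term produced by the product-space split and no Riesz-type identification enters — and remembering that skew-symmetry of the \emph{bilinear} form forces the induced \emph{quadratic} form to vanish identically over $\R$, rather than merely be antisymmetric. All manipulations are carried out pointwise in $t$, so no regularity of $\vecInput(\cdot)$ beyond existence of $\vecInput(t)$ is needed.
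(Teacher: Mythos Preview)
Your proposal is correct and follows essentially the same approach as the paper: test the variational formulation \eqref{eq:espH_variational} with $[\vecSpaceState,\vecSpaceIO]^\top=[\ddt\vecState(t),\vecInput(t)]^\top$, use the chain rule for $\ddt[\Ham(\vecState(t))]$, invoke skew-symmetry of $\evalField[\vecState(t)]{\lambda}$ to annihilate that term, and use positive (semi-)definiteness of $\evalField[\vecState(t)]{\dissipation}$ for the inequality. The additional remarks you make about the product-space splitting and the space in which $\vecOutput(t)$ lives are accurate bookkeeping that the paper leaves implicit.
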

\begin{proof}
Evaluating \eqref{eq:espH_variational} with $\vecSpaceState = \ddt \vecState(t) \in \spaceState$ and $\vecSpaceIO = \vecInput(t) \in \spaceIO$,
it holds due to
skew-symmetry of $\evalField[\vecState(t)]{\lambda}$,
positive (semi-)definiteness of $\evalField[\vecState(t)]{\dissipation}$, and
$\ddt [\Ham(\vecState(t))] = \prodDual[\spaceState]{\evalField[\vecState(t)]{\gradHam}}{\ddt \vecState(t)}$ that
\begin{linenomath*}
\begin{align*}
    \ddt [\Ham(\vecState(t))]
=&
\underbrace{
    \prodDual[\spaceState \times \spaceIO]{
        \evalField[\vecState(t)]{\lambda}
        \begin{bmatrix}
            \ddt \vecState(t)\\
            \vecInput(t)
        \end{bmatrix}
    }{
        \begin{bmatrix}
            \ddt \vecState(t)\\
            \vecInput(t)
        \end{bmatrix}
    }
}_{
    = 0
}\\
&\underbrace{
    -\prodDual[\spaceState \times \spaceIO]{
        \evalField[\vecState(t)]{\dissipation}
        \begin{bmatrix}
            \ddt \vecState(t)\\
            \vecInput(t)
        \end{bmatrix}
    }{
        \begin{bmatrix}
            \ddt \vecState(t)\\
            \vecInput(t)
        \end{bmatrix}
    }
}_{
    \leq 0
}\\
&+
\prodDual[\spaceIO]{\vecOutput(t)}{\vecInput(t)}\\
\leq&
\prodDual[\spaceIO]{\vecOutput(t)}{\vecInput(t)}.\qedhere
\end{align*}
\end{linenomath*}
\end{proof}
Integrating the \pbe (\ref{eq:dissipation_inequality}a) over time yields the \emph{energy balance equation} (\ebe)
\begin{linenomath*}
\begin{align}\label{eq:energy_equations}
    \Ham(\vecState(t))
    - \Ham(\vecState(s))
    =
    \int_{s}^t
    -\prodDual[\spaceState \times \spaceIO]{
        \evalField[\vecState(\tau)]{\dissipation}
        \begin{bmatrix}
            \ddt \vecState(\tau)\\
            \vecInput(\tau)
        \end{bmatrix}
    }{
        \begin{bmatrix}
            \ddt \vecState(\tau)\\
            \vecInput(\tau)
        \end{bmatrix}
    }
    +
    \prodDual[\spaceIO]{\vecOutput(\tau)}{\vecInput(\tau)} \intd \tau.
\end{align}
\end{linenomath*}

\subsection{Structure-preserving spatial discretization and model reduction via Galerkin projection}
\label{subsec:projection}

We assume two closed subspaces $\spaceStateApprox \subset \spaceState$, $\spaceIOApprox \subset \spaceIO$ as well as input $\vecInputApprox(\cdot): \domainTime \to \spaceIOApprox$ are given. Then, the solution $\vecStateApprox(\cdot) \in \diffFun{1} (\domainTime, \spaceStateApprox)$ and output $\vecOutputApprox(\cdot) \in \diffFun{1}(\domainTime, \spaceIOApprox)$ are determined by the variational formulation \eqref{eq:espH_variational} on the subspaces, i.e., for all $\vecSpaceState \in \spaceStateApprox$, $\vecSpaceIO \in \spaceIOApprox$, which results in an \espH system
$
\left(
    \spaceStateApprox, \spaceIOApprox,
    \restrict{\lambda}{\spaceStateApprox \times \spaceIOApprox},
    \restrict{\dissipation}{\spaceStateApprox \times \spaceIOApprox}
    \restrict{\Ham}{\spaceStateApprox}
\right)
$.
Since the Galerkin projection results again in an \espH system, it is called \emph{structure-preserving}.

The Galerkin projection, in general, is well known to formulate spatial discretization techniques for PDEs and \mor.
Combining the \espH formulation with a Galerkin projection leads to a structure-preserving projection which  
yields the advantage that the Galerkin approximation inherits the properties from \Cref{thm:power_balance_equation}.
Writing a finite-dimensional system
$\left(
    \spaceStateApprox, \spaceIOApprox,
    \restrict{\lambda}{\spaceStateApprox \times \spaceIOApprox},
    \restrict{\dissipation}{\spaceStateApprox \times \spaceIOApprox}
    \restrict{\Ham}{\spaceStateApprox}
\right)$
in a basis results in the reduced model \eqref{eq:espH_coord_mor} given in \Cref{subsec:matrix_formulation}.

\subsection{Structure-preserving temporal discretization via Petrov--Galerkin projection}
\label{subsec:time_discretization}
Structure-preserving methods for time discretization can also be obtained within the energy-stable formalism.
The following techniques are formulated for an \espH system $\left(\spaceState, \spaceIO, \lambda, \dissipation, \Ham\right)$ but also hold for systems semi-discretized in space or reduced systems as described in the previous section.
We follow the construction of a time-discretized problem from the energy-stable systems \cite{Egger2021} via a Petrov--Galerkin projection and extend the ideas to the \espH system. 

Consider finite-dimensional subspaces $\spaceState_N \subset H^1(\domainTime; \spaceState)$, $\spaceStateAlt_N \subset L_2(\domainTime; \spaceState)$, $\spaceState_{\mathrm{IO},N}, \spaceStateAlt_{\mathrm{IO},N} \subset L_2(\domainTime; \spaceIO)$, and $\spaceState_{y,N} \subset L_2(\domainTime; \dual\spaceIO)$.
Then the time-discretized problem is given by the Petrov--Galerkin projection:
Given an input curve $\vecInput_N(\cdot) \in \spaceState_{\mathrm{IO},N}$,
find a solution curve $\vecState_N(\cdot) \in \spaceState_N$ and an output curve $\vecOutput_N(\cdot) \in \spaceState_{y,N}$ such that
\begin{linenomath*}
    \begin{align}\label{eq:time_discrete}
        \int_{\domainTime}
        \prodDual[\spaceState \times \spaceIO]{
            \begin{bmatrix}
                -\evalField[\vecState(t)]{\gradHam}\\
                \vecOutput(t)
            \end{bmatrix}
            +
            \left(
                \evalField[\vecState(t)]{\lambda} - \evalField[\vecState(t)]{\dissipation}
            \right)
            \begin{bmatrix}
                \ddt \vecState(t)\\
                \vecInput(t)
            \end{bmatrix}
        }{
            \begin{bmatrix}
                \vecSpaceState(t)\\
                \vecSpaceIO(t)
            \end{bmatrix}
        }
        \intd{t}
        &=
        0
    \end{align}
\end{linenomath*}
for all $(\vecSpaceState(\cdot), \vecSpaceIO(\cdot)) \in \spaceStateAlt_{N} \times \spaceStateAlt_{\mathrm{IO},N}$.
In \cite[Remark~5 and 6]{Egger2021}, it is shown for energy-stable systems that
this formulation covers discrete gradient methods and the average vector field
collocation method and is closely related to the Lobatto-IIIA method.
The present formulation allows to extend those ideas to systems with an input--output port.

If $\ddt \vecState_N(\cdot) \in \spaceStateAlt_N$ and $\vecOutput(\cdot) \in \spaceStateAlt_{\mathrm{IO},N}$, it can be shown similarly to the proof of \Cref{thm:power_balance_equation} that an \ebe like \eqref{eq:energy_equations} and an integrated \di hold:
\begin{linenomath*}
\begin{align*}
    \Ham\left(\vecState_N(\tEnd)\right) - \Ham\left(\vecState_N(\tInit)\right)
=&
    -
    \int_{\domainTime}
    \prodDual[\spaceState \times \spaceIO]{
        \evalField[\vecState(t)]{\dissipation}
        \begin{bmatrix}
            \ddt \vecState(t)\\
            \vecInput(t)
        \end{bmatrix}
    }{
        \begin{bmatrix}
            \ddt \vecState(t)\\
            \vecInput(t)
        \end{bmatrix}
    }
    \intd t\\
&+
    \int_{\domainTime}
    \prodDual[\spaceIO]{\vecOutput_N(t)}{\vecInput_N(t)}
    \intd t\\
\leq& \int_{\domainTime}
    \prodDual[\spaceIO]{\vecOutput_N(t)}{\vecInput_N(t)}
\intd t.
\end{align*}
\end{linenomath*}

\section{Dirac structure formulation: Connecting to port-Hamiltonian systems}
\label{sec:dirac_structure}

In this section, we show that the \espH\ system is a \pH system. 
To this end, \Cref{subsec:dirac_structures} formulates general \pH systems on manifolds via Dirac structures. \Cref{subsec:iso_ph,subsec:es_pH} show how the established \isopH systems are classically derived in the Dirac formulation and how to transfer these ideas to \espH systems.
For the sake of brevity, we restrict to finite-dimensional systems.

\subsection{Fundamentals: Dirac structure and finite-dimensional port-Hamiltonian systems}
\label{subsec:dirac_structures}
Assume to be given a finite-dimensional real Banach space $\spaceFlow$.
In contrast to the previous sections, we use $\spaceFlow$ instead of $\spaceState$ to match the notation established for Dirac structures.
Moreover, an element
$\vecFlow \in \spaceFlow$ is called a \emph{flow vector},
$\vecEffort \in \dual\spaceFlow$ is called an \emph{effort vector},
and the dual product $\prodDual[\spaceFlow]{\vecEffort}{\vecFlow}$ describes a physical power.

We call a subspace $\diracStruct \subset \spaceFlow \times \dual\spaceFlow$ a \emph{Dirac structure} if
\begin{linenomath*}
\begin{align}\label{eq:dirac_struct}
    \text{(i)~} \prodDual{\vecEffort}{\vecFlow} &= 0 \quad \forall (\vecFlow, \vecEffort) \in \diracStruct&
    &\text{and}&
    \text{(ii)~} \dim(\diracStruct) &= \dim(\spaceFlow).
\end{align}
\end{linenomath*}
From the first property, (i), it can be shown that $\dim(\diracStruct) \leq \dim(\spaceFlow)$.
Thus, the interpretation of a Dirac structure is that it is a subspace of $\spaceFlow \times \dual\spaceFlow$ which is (i) power-conserving and (ii) of maximal dimension.

A \emph{\pH system (formulated on a finite-dimensional manifold)}
is characterized by a manifold $\mnf$, on which the dynamics evolve,
a continuously differentiable functional $\Ham \in \diffFun{1}(\mnf, \field)$
(called the \emph{energy functional} or \emph{Hamiltonian}),
finite-dimensional vector spaces $\spaceFlowIO$ and $\spaceFlowRes$ which define spaces for input--output and resistive variables,
and a state-dependent\footnote{An alternative naming is \emph{modulated} Dirac structure.} \emph{Dirac structure}
$\evalField[\vecState]{\diracStruct} \subset (\TpMnf \times \spaceFlowIO \times \spaceFlowRes) \times \dual{(\TpMnf \times \spaceFlowIO \times \spaceFlowRes)}$, i.e., $\evalField[\vecState]{\diracStruct}$ is for each fixed $\vecState \in \mnf$ a Dirac structure in the understanding of the previous paragraph with the flow space $\spaceFlow = \TpMnf \times \spaceFlowIO \times \spaceFlowRes$ where $\TpMnf$ denotes the tangent space of $\mnf$ at the point $\vecState \in \mnf$.
The duality pairing of the flow space $\spaceFlow = \TpMnf \times \spaceFlowIO \times \spaceFlowRes$ is assumed to be inherited from the duality pairings on $\TpMnf$, $\spaceFlowIO$, and $\spaceFlowRes$ with
\begin{linenomath*}
\begin{align}\label{eq:split_power}
  &\prodDual{\vecEffort}{\vecFlow}
  = \prodDual[\TpMnf]{\vecEffort_{\TpMnf}}{\vecFlow_{\TpMnf}}
  + \prodDual[\spaceFlowIO]{\vecEffort_{\spaceFlowIO}}{\vecFlow_{\spaceFlowIO}}
  + \prodDual[\spaceFlowRes]{\vecEffort_{\spaceFlowRes}}{\vecFlow_{\spaceFlowRes}},
\end{align}
\end{linenomath*}
for all $\vecFlow = \left(\vecFlow_{\TpMnf}, \vecFlow_{\spaceFlowIO}, \vecFlow_{\spaceFlowRes} \right) \in \spaceFlow$ and $\vecEffort = \left(\vecEffort_{\TpMnf}, \vecEffort_{\spaceFlowIO}, \vecEffort_{\spaceFlowRes} \right) \in \dual\spaceFlow$.
The \pH system determines the evolution of a \emph{state trajectory} $\vecState(\cdot) \in \diffFun{1}(\domainTime, \mnf)$ based on trajectories $\vecFlowIO(\cdot): \domainTime \to \spaceFlowIO$, $\vecEffortIO(\cdot): \domainTime \to \dual\spaceFlowIO$ for the input--output variables and $\vecFlowRes(\cdot): \domainTime \to \spaceFlowRes$, $\vecEffortRes(\cdot): \domainTime \to \dual\spaceFlowRes$ for the resistive variables by requiring that
\begin{linenomath*}
\begin{align}\label{eq:dynamics_in_dirac_struct}
    \left(
        -\ddt \vecState(t),
        \vecFlowIO(t),
        \vecFlowRes(t),
        \evalField[\vecState(t)]{\gradHam},
        \vecEffortIO(t),
        \vecEffortRes(t)
    \right) \in \diracStruct(\vecState(t)),
    \qquad
    \forall t \in \domainTime.
\end{align}
\end{linenomath*}
The idea of this choice is that it automatically follows from \eqref{eq:dirac_struct}, \eqref{eq:split_power}, \eqref{eq:dynamics_in_dirac_struct} that
\begin{linenomath*}
\begin{align*}
    \prodDual[\TpMnf]{\evalField[\vecState(t)]{\gradHam}}{-\ddt \vecState(t)}
    +
    \prodDual[\spaceFlowIO]{\vecEffortIO(t)}{\vecFlowIO(t)}
    +
    \prodDual[\spaceFlowRes]{\vecEffortRes(t)}{\vecFlowRes(t)}
    =
    0
    &
\end{align*}
and, thus, with the additional assumption that $\prodDual[\spaceFlowRes]{\vecEffortRes(t)}{\vecFlowRes(t)} \leq 0$,
and by using the chain rule, a \pbe and \di hold
\begin{equation}
\begin{aligned}\label{eq:pH_power_equations}
    \ddt\left[
        \Ham(\vecState(t))
    \right]
    =&
    \prodDual[\TpMnf]{\evalField[\vecState(t)]{\gradHam}}{\ddt \vecState(t)}\\
    =&
    \prodDual[\spaceFlowIO]{\vecEffortIO(t)}{\vecFlowIO(t)}
    +
        \prodDual[\spaceFlowRes]{\vecEffortRes(t)}{\vecFlowRes(t)}\\
    \leq&
    \prodDual[\spaceFlowIO]{\vecEffortIO(t)}{\vecFlowIO(t)}.
\end{aligned}
\end{equation}
\end{linenomath*}

\subsection{Fundamentals: Input--state--output port-Hamiltonian system based on Dirac structure formulation}
\label{subsec:iso_ph}
The \isopH system is a classical \pH formulation with flow space $\spaceFlow = \TpMnf \times \spaceFlowIO \times \spaceFlowRes$ (see, e.g., \cite{vanderSchaft2014,mehrmann2023control}).
The resistive variables are split in two parts $\spaceFlowRes = \spaceFlowResState \times \spaceFlowResIO$ which is designed to introduce a dissipation on the state via $\spaceFlowResState$ and the IO variables via $\spaceFlowResIO$.
The \isopH system is characterized by a pointwise skew-symmetric operator ${\mapISO}: \mnf \to \linBoundedOps(\dual\spaceFlow, \spaceFlow)$, which defines the Dirac structure $\evalField[\vecState]{\diracStructISO}$, and a pointwise symmetric, positive (semi-)definite operator ${\res{W}}: \mnf \to \linBoundedOps(\dual\spaceFlowRes, \spaceFlowRes)$ to describe a linear relation of the resistive variables with
\begin{linenomath*}
\begin{equation}
    \begin{aligned}\label{eq:dirac_struct_iso}
        \evalField[\vecState]{\diracStructISO}
    &= \left\{
        (\vecFlow, \vecEffort) \in
    \spaceFlow \times
    \dual\spaceFlow
    \,\big|\,
    \vecFlow = \evalField[\vecState]{\mapISO}\, \vecEffort
    \right\},\\
    \evalField[\vecState]{\mapISO}
&=\vcentcolon \begin{bmatrix}
    -\evalField[\vecState]{\Kstate} & -\evalField[\vecState]{\KstateIO} & -\evalField[\vecState]{\KStoepselState} & 0\\
    \banachAdjoint{\evalField[\vecState]{\KstateIO}} & -\evalField[\vecState]{\KIO} & 0 & -\evalField[\vecState]{\KStoepselIO}\\
    \banachAdjoint{\evalField[\vecState]{\KStoepselState}} & 0 & 0 & 0\\
    0 & \banachAdjoint{\evalField[\vecState]{\KStoepselIO}} & 0 & 0\\
\end{bmatrix},\\
    \vecEffortRes &=
-\underbrace{
    \begin{bmatrix}
        \evalField[\vecState]{\WstateRes} & \evalField[\vecState]{\WstateIORes}\\
        \banachAdjoint{\evalField[\vecState]{\WstateIORes}} & \evalField[\vecState]{\WIORes}\\
    \end{bmatrix}
}_{\vcentcolon= \evalField[\vecState]{\res{W}}} \vecFlowRes
\end{aligned}
\end{equation}
\end{linenomath*}
for all $\vecState \in \mnf$.
For a given {input trajectory} $\vecInput(\cdot): \domainTime \to \dual\spaceFlowIO$ with resulting {output trajectory} $\vecOutput(\cdot): \domainTime \to \spaceFlowIO$,
the choice of the dynamics in \eqref{eq:dynamics_in_dirac_struct} with $\vecFlowIO(\cdot) = \vecOutput(\cdot)$ and $\vecEffortIO(\cdot) = \vecInput(\cdot)$ together with \eqref{eq:dirac_struct_iso} results in the differential algebraic system
\begin{linenomath*}
\begin{align*}
    -\ddt \vecState(t)
&= -\evalField[\vecState(t)]{\Kstate} \evalField[\vecState(t)]{\gradHam}
- \evalField[\vecState(t)]{\KstateIO}\, \vecInput(t)
- \evalField[\vecState(t)]{\KStoepselState} \vecEffortResState(t),&
    \vecFlowResState(t)
&= \banachAdjoint{\evalField[\vecState(t)]{\KStoepselState}} \evalField[\vecState(t)]{\gradHam},\\
    \vecOutput(t)
&= \banachAdjoint{\evalField[\vecState(t)]{\KstateIO}} \evalField[\vecState(t)]{\gradHam}
- \evalField[\vecState(t)]{\KIO}\, \vecInput(t)
- \evalField[\vecState(t)]{\KStoepselIO} \vecEffortResIO(t),&
    \vecFlowResIO(t)
&= \banachAdjoint{\evalField[\vecState(t)]{\KStoepselIO}} \vecInput(t)
\end{align*}
\end{linenomath*}
with $\vecFlowRes = (\vecFlowResState, \vecFlowResIO)$ and $\vecEffortRes = (\vecEffortResState, \vecEffortResIO)$.
The algebraic equations for the resistive trajectories can be resolved by using $\vecEffortRes(t) = -\evalField[\vecState(t)]{\res{W}} \vecFlowRes(t)$ from \eqref{eq:dirac_struct_iso} which results in a system of ODEs with output equation
\begin{linenomath*}
\begin{align*}
        \ddt \vecState(t)
&= \left( \evalField[\vecState(t)]{\Kstate} - \evalField[\vecState(t)]{\Wstate} \right) \evalField[\vecState(t)]{\gradHam}
+ \left( \evalField[\vecState(t)]{\KstateIO} - \evalField[\vecState(t)]{\WstateIO} \right) \vecInput(t),\\
        \vecOutput(t)
&= \left( \banachAdjoint{\evalField[\vecState(t)]{\KstateIO}} + \banachAdjoint{\evalField[\vecState(t)]{\WstateIO}} \right) \evalField[\vecState(t)]{\gradHam}
+ \left( \evalField[\vecState(t)]{\WIO} - {\evalField[\vecState(t)]{\KIO}} \right) \vecInput(t),
\end{align*}
\end{linenomath*}
with $\evalField[\vecState]{\Wstate} \vcentcolon= \evalField[\vecState]{\KStoepselState} \evalField[\vecState]{\WstateRes} \banachAdjoint{\evalField[\vecState]{\KStoepselState}}$,
$\evalField[\vecState]{\WstateIO} \vcentcolon= \evalField[\vecState]{\KStoepselState} \evalField[\vecState]{\WstateIORes} \banachAdjoint{\evalField[\vecState]{\KStoepselIO}}$,
and
$\evalField[\vecState]{\WIO} \vcentcolon= \evalField[\vecState]{\KStoepselIO} \evalField[\vecState]{\WIORes} \banachAdjoint{\evalField[\vecState]{\KStoepselIO}}$.
Since $\evalField[\vecState]{\res{W}}$ is positive \mbox{(semi-)}definite, it holds $\prodDual[\spaceFlowRes]{\vecEffortRes(t)}{\vecFlowRes(t)}
=
-\prodDual[\spaceFlowRes]{\evalField[\vecState(t)]{\res{W}} \vecFlowRes(t)}{\vecFlowRes(t)}
\leq 0$; thus the \pbe and \di, see \eqref{eq:pH_power_equations}, hold.

\subsection{Energy-based port-Hamiltonian system based on Dirac structure formulation}
\label{subsec:es_pH}
To formulate an \espH\ system in the \pH framework based on Dirac structures,
we consider the flow space
$\spaceFlow
= \TpMnf \times \spaceFlowIO \times \spaceFlowRes$
with input--output and resistive variables
where $\spaceFlowRes = \spaceFlowResState \times \spaceFlowResIO$.
The \espH system is characterized by a pointwise skew-symmetric operator ${\mapES}: \mnf \to \linBoundedOps(\spaceFlow, \dual\spaceFlow)$, which defines the Dirac structure $\evalField[\vecState]{\diracStructES} \subset \spaceFlow \times \dual\spaceFlow$, and a pointwise symmetric, positive (semi-)definite operator ${\res{\dissipation}}: \mnf \to \linBoundedOps(\spaceFlowRes, \dual\spaceFlowRes)$ with
\begin{linenomath*}
\begin{equation}
\begin{aligned}\label{eq:dirac_struct_es}
\evalField[\vecState]{\diracStructES}
&\vcentcolon=
\left\{
    (\vecFlow, \vecEffort) \in \spaceFlow \times \dual\spaceFlow
    \;\big|\;
    \vecEffort = \evalField[\vecState]{\mapES} \vecFlow
\right\},\\
    \evalField[\vecState]{\mapES}
&=\vcentcolon
\begin{bmatrix}
    -\evalField[\vecState]{\Lstate} & \evalField[\vecState]{\LstateIO} & \evalField[\vecState]{\LStoepselState} & 0\\
    -\banachAdjoint{\evalField[\vecState]{\LstateIO}} & -{\evalField[\vecState]{\LIO}} & 0 & -{\evalField[\vecState]{\LStoepselIO}}\\
    -\banachAdjoint{\evalField[\vecState]{\LStoepselState}} & 0 & 0 &0\\
    0 & \banachAdjoint{\evalField[\vecState]{\LStoepselIO}} & 0 &0\\
\end{bmatrix},\\
    \vecFlowRes &=
- \underbrace{
    \begin{bmatrix}
        \evalField[\vecState]{\PhistateRes} & \evalField[\vecState]{\PhistateIORes}\\
        \banachAdjoint{\evalField[\vecState]{\PhistateIORes}} & \evalField[\vecState]{\PhiIORes}\\
    \end{bmatrix}
}_{\vcentcolon= \evalField[\vecState]{\dissipationRes}}
\vecEffortRes
\end{aligned}
\end{equation}
\end{linenomath*}
for all $\vecState \in \mnf$.
Assuming to be given an {input trajectory} $\vecInput(\cdot): \domainTime \to \spaceFlowIO$ and
to be interested in an {output trajectory} $\vecOutput(\cdot): \domainTime \to \dual\spaceFlowIO$,
the choice of the dynamics in \eqref{eq:dynamics_in_dirac_struct} with $\vecFlowIO(\cdot) = \vecInput(\cdot)$ and $\vecEffortIO(\cdot) = \vecOutput(\cdot)$ results in combination with \eqref{eq:dirac_struct_es} in the differential algebraic system
\begin{linenomath*}
\begin{align*}
    \evalField[\vecState(t)]{\gradHam}
&= \evalField[\vecState(t)]{\Lstate} \ddt \vecState(t)
+\evalField[\vecState(t)]{\LstateIO} \vecInput(t)
+\evalField[\vecState(t)]{\LStoepselState} \vecFlowResState(t),&
    \vecEffortResState(t)
&= \banachAdjoint{\evalField[\vecState(t)]{\LStoepselState}} \ddt \vecState(t),\\
    \vecOutput(t)
&= \banachAdjoint{\evalField[\vecState(t)]{\LstateIO}} \ddt \vecState(t)
- \evalField[\vecState(t)]{\LIO}\, \vecInput(t)
- \evalField[\vecState(t)]{\LStoepselIO}\, \vecFlowResIO(t),&
    \vecEffortResIO(t)
&= \banachAdjoint{\evalField[\vecState(t)]{\LStoepselIO}} \vecInput(t).
\end{align*}
\end{linenomath*}
The algebraic equations for the resistive trajectories can be resolved by using $\vecFlowRes(t) = -\evalField[\vecState(t)]{\dissipationRes} \vecEffortRes(t),$ from \eqref{eq:dirac_struct_es}, which results in an evolution problem with output equation
\begin{linenomath*}
\begin{equation*}
\begin{aligned}
    \left( -\evalField[\vecState(t)]{\Lstate} + \evalField[\vecState(t)]{\Phistate} \right) \ddt \vecState(t)
&= -\evalField[\vecState(t)]{\gradHam}
+ \left( \evalField[\vecState(t)]{\LstateIO} - \evalField[\vecState(t)]{\PhistateIO} \right) \vecInput(t),\\
        \vecOutput(t)
&= \left( 
    \banachAdjoint{\evalField[\vecState(t)]{\LstateIO}}
    + \banachAdjoint{\evalField[\vecState(t)]{\PhistateIO}}
\right) \ddt \vecState(t)
+ \left(
    {\evalField[\vecState(t)]{\PhiIO}}
    - \evalField[\vecState(t)]{\LIO}
\right) \vecInput(t),
\end{aligned}
\end{equation*}
\end{linenomath*}
where we abbreviate
$\evalField[\vecState]{\Phistate} \vcentcolon= \evalField[\vecState]{\LStoepselState}\, \evalField[\vecState]{\PhistateRes} \banachAdjoint{\evalField[\vecState]{\LStoepselState}}$,
$\evalField[\vecState]{\PhistateIO} \vcentcolon= \evalField[\vecState]{\LStoepselState}\, \evalField[\vecState]{\PhistateIORes} \banachAdjoint{\evalField[\vecState]{\LStoepselIO}}$, and
$\evalField[\vecState]{\PhiIO} \vcentcolon= \evalField[\vecState]{\LStoepselIO}\, \evalField[\vecState]{\PhiIORes} \banachAdjoint{\evalField[\vecState]{\LStoepselIO}}$.
Since $\evalField[\vecState]{\dissipationRes}$ is positive (semi-)definite, it holds $\prodDual[\spaceFlowRes]{\vecEffortRes(t)}{\vecFlowRes(t)}
=
-\prodDual[\spaceFlowRes]{\vecEffortRes(t)}{\evalField[\vecState(t)]{\dissipationRes} \vecEffortRes(t)}
\leq 0$
and, thus,
the \pbe and \di \eqref{eq:pH_power_equations} hold.
Both equations have already been proven for the infinite-dimensional case in \Cref{thm:power_balance_equation}. In this subsection, however, the equations have been linked to the theory of Dirac structures and \pH systems.

\section{Conclusion and Outlook}
\label{sec:conclusion_outlook}
We introduced \espH systems by extending energy-stable systems with a consistent output equation (\Cref{sec:banach_space}) and by linking it to \pH systems via Dirac structures (\Cref{sec:dirac_structure}). 
The advantage of the energy-stable formulation of \pH systems is that it allows a unified framework for structure-preserving discretization and for \mor.
Future work will investigate a numerical example using the full potential of the unified framework for structure-preserving discretization and for \mor.

\bibliographystyle{plain-doi}
\bibliography{references}


\end{document}